\documentclass[12pt]{amsart}
\usepackage{amsthm,amsmath,amsfonts,amssymb,latexsym,amscd,mathrsfs,hyperref,cleveref,graphicx}
\usepackage{setspace}
\usepackage{cite}
\usepackage{phonetic}
\setdisplayskipstretch{2}
\usepackage{cite}
\usepackage{crossreftools}

\makeatletter
\renewcommand{\pod}[1]{\allowbreak\mathchoice
  {\if@display \mkern 18mu\else \mkern 8mu\fi (#1)}
  {\if@display \mkern 18mu\else \mkern 8mu\fi (#1)}
  {\mkern4mu(#1)}
  {\mkern4mu(#1)}
}

\makeatletter
\@namedef{subjclassname@2020}{%
  \textup{2020} Mathematics Subject Classification}
\makeatother

\pdfstringdefDisableCommands{%
    \let\Cref\crtCref
    \let\cref\crtcref
}

\newtheorem{thm}{Theorem}
\newtheorem{lem}{Lemma}

\newtheorem*{thm*}{Theorem}
\newtheorem*{cor*}{Corollary}
\newtheorem*{prop*}{Proposition}

\theoremstyle{definition}

\newtheorem*{exa*}{Example}

\newcommand{\mc}{\mathcal}

   \def \e{\varepsilon} \def \g{\gamma}  \def \l{\lambda} \def \s{\sigma} \def \t{\theta} 

\makeatletter
\def\widebreve{\mathpalette\wide@breve}
\def\wide@breve#1#2{\sbox\z@{$#1#2$}%
     \mathop{\vbox{\m@th\ialign{##\crcr
\kern0.08em\brevefill#1{0.8\wd\z@}\crcr\noalign{\nointerlineskip}%
                    $\hss#1#2\hss$\crcr}}}\limits}
\def\brevefill#1#2{$\m@th\sbox\tw@{$#1($}%
  \hss\resizebox{#2}{\wd\tw@}{\rotatebox[origin=c]{90}{\upshape(}}\hss$}
\makeatletter

\numberwithin{equation}{section}

\setlength{\leftmargini}{1.75em} \setlength{\leftmarginii}{1.75em}
\renewcommand{\labelenumi}{\setlength{\labelwidth}{\leftmargin}
   \addtolength{\labelwidth}{-\labelsep}
   \hbox to \labelwidth{\theenumi.\hfill}}

\begin{document}
\title{The exceptional set for integers of the form $[p_1^c]+[p_2^c]$}
\author{Roger Baker}
\address{Department of Mathematics\newline
\indent Brigham Young University\newline
\indent Provo, UT 84602, U.S.A}
\email{baker@math.byu.edu}


 \begin{abstract}
Let $1 < c < 24/19$. We show that the number of integers $n \le N$ that cannot be written as $[p_1^c] + [p_2^c]$ ($p_1$, $p_2$ primes) is $O(N^{1-\s+\e})$. Here $\s$ is a positive function of $c$ (given explicitly) and $\e$ is an arbitrary positive number.
 \end{abstract}

\keywords{Exponential sums, the Hardy-Littlewood method.}

\subjclass[2020]{Primary 11N36; Secondary 11L20.}
\maketitle

\section{Introduction}\label{sec:intro}

Let $\mc E_c(N)$ denote the set of integers $n \le N$ which cannot be represented in the form $n = [p_1^c] + [p_2^c]$. Here $c$ is a constant, $c > 1$ and $p_1$, $p_2$ are primes. We seek to show for as large a range of $c$ as possible that the cardinality $|\mc E_c(N)|$ is of smaller order than $N$ as $N \to \infty$. There is a result of this kind due to Laporta \cite{lap}: let $c\in \left(1, \frac{17}{16}\right)$. Then for any $\e > 0$ and $B > 0$,
 \begin{equation}\label{eq:|Ec(N)|}
|\mc E_c(N)| \ll N\, \exp(-B(\log N)^{\frac 13- \e}). 
 \end{equation}
The implied constant in \eqref{eq:|Ec(N)|} depends on $B$ and $\e$.

For the shorter range $c \in \left(1, \frac{24}{23}\right)$, Zhu \cite{zhu} established a bound that saves a power of $N$; for $\e > 0$,
 \begin{equation}\label{eq:|Ec(N)|<<N}
|\mc E_c(N)| \ll N^{41/18 - 4\g/3 + \e}.
 \end{equation}
Here and below $\g = 1/c$. The implied constant depends on $c$ and $\e$ in \eqref{eq:|Ec(N)|<<N}.

In the present paper we make a power saving for a longer range of $c$.

 \begin{thm}\label{thm:Let1<c<24/19}
Let $1 < c < 24/19$ and write
 \[\s = \min\left(\frac{48-38c}{29} \, , \,
 \frac{16-10c}{75}\right).\]
We have
 \begin{equation}\label{eq:|Ec(N)|<<N1-sig+eps}
|\mc E_c(N)|\ll N^{1-\s+\e}.
 \end{equation}
The implied constant in \eqref{eq:|Ec(N)|<<N1-sig+eps} depends on $c$ and $\e$.
 \end{thm}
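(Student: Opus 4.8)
The plan is to run the Hardy--Littlewood circle method and reduce the theorem to a mean-square (variance) estimate. Introduce the generating function
\[
S(\a) = \sum_{[p^c]\le N}(\log p)\,e(\a[p^c]),
\]
so that the weighted count $R(n) = \int_0^1 S(\a)^2 e(-\a n)\,d\a$ detects representations $n = [p_1^c]+[p_2^c]$. Writing $\g = 1/c$, the expected main term is $M(n) \asymp n^{2\g-1}(\log n)^{-2}$, positive once the associated singular series is shown to be bounded below. Since $R(n)=0$ for every $n \in \mc E_c(N)$, we have $M(n)^2 = |R(n)-M(n)|^2$, and therefore
\[
|\mc E_c(N)|\, N^{4\g - 2}(\log N)^{-4} \ll \sum_{n \le N}|R(n)-M(n)|^2.
\]
Splitting $[0,1]$ into major arcs $\mf M$ and minor arcs $\mf m$, the task is to show the major arcs produce $M(n)$ with small error, while Bessel's inequality bounds the minor-arc contribution by $\sum_{n\le N}\big|\int_{\mf m}S(\a)^2 e(-\a n)\,d\a\big|^2 \le \int_{\mf m}|S(\a)|^4\,d\a$.

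On the major arcs I would first strip the floor function: writing $[p^c] = p^c - \{p^c\}$ and expanding $e(-\a\{p^c\})$ by Vaaler's truncated Fourier approximation to the sawtooth, one replaces $S(\a)$ by a short linear combination of the smoother sums $\sum_p (\log p)\,e(\b p^c)$. Standard estimates for these on $\mf M$, with the prime number theorem, deliver $M(n)$ and confirm its size and positivity. The genuinely hard part is the minor arcs, where everything reduces to a pointwise bound $\sup_{\a\in\mf m}|S(\a)| \ll N^{\g-\rho}$ with a saving $\rho=\rho(c)>0$ as large as possible, after which
\[
\int_{\mf m}|S(\a)|^4\,d\a \le \Big(\sup_{\mf m}|S|\Big)^2\int_0^1|S(\a)|^2\,d\a \ll N^{2\g-2\rho}\cdot N^{\g}\log N,
\]
since $\int_0^1|S|^2 = \sum_m r(m)^2 \ll N^{\g}\log N$, where $r(m)=\sum_{[p^c]=m}\log p$ and each integer below $N$ arises as a value $[p^c]$ at most $O(1)$ times when $c<2$. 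Feeding this back yields $|\mc E_c(N)| \ll N^{2-\g-2\rho+\e}$, so the theorem is equivalent to the exponential-sum saving with $2\rho = \s - \g + 1$.

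To bound $S(\a)$ on $\mf m$ I would again remove the floor via Vaaler, reducing to Weyl-type sums $\sum_{p}(\log p)\,e(\b p^c)$ uniformly in $\b$. Applying Vaughan's (or Heath-Brown's) identity decomposes each into Type~I sums $\sum_{m}\sum_{k}a_m\, e(\b(mk)^c)$ and Type~II bilinear sums $\sum_m\sum_k a_m b_k\, e(\b(mk)^c)$, which are then estimated after a dyadic splitting by van der Corput differencing of the phase $\b(mk)^c$ and the theory of exponent pairs. The main obstacle --- and the source of the two competing quantities in $\s = \min\big(\frac{48-38c}{29},\frac{16-10c}{75}\big)$ --- is optimizing this process: one bound arises from the regime in which the Type~II ranges are balanced against the worst bilinear sum, the other from the Type~I estimate (together with the major-arc error), and $\s$ is the saving that survives after both are made uniform over $\mf m$. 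The admissible range stops exactly at $c = 24/19$, where the first quantity $\frac{48-38c}{29}$ vanishes and no power saving remains. I expect the delicate bookkeeping in this exponent-pair optimization, rather than the circle-method skeleton, to carry all the real difficulty.
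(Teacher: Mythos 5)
Your circle-method skeleton --- the representation integral, Bessel's inequality, and the reduction to a fourth-moment bound for the generating sum on the minor arcs --- is the same as the paper's. The genuine gap is in the one step you treat as routine: you bound the fourth moment by
\[
\int_{\mf m}|S|^4\,d\a \le \Big(\sup_{\mf m}|S|\Big)^2\int_0^1|S|^2\,d\a
\]
and evaluate $\int_0^1|S|^2$ by Parseval, which is $\asymp X\mc L^2$ with $X=N^\g$. Unwinding your own reduction ($2\rho=1+\s-\g$), this forces the pointwise bound $\sup_{\mf m}|S|\ll X^{1-(c-1+c\s)/2}$, i.e.\ a power saving of $(c-1+c\s)/2$ in the length of the sum, uniformly down to $x\approx N^{-1/3-\e}$. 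Even with $\s=0$ this is a saving of $(c-1)/2$, which at $c$ near $24/19$ means $\sup_{\mf m}|S|\ll X^{1-5/38}$. No decomposition into Type I/Type II sums can deliver this: Heath-Brown's identity forces bilinear ranges with $Y\ll X^{1/3}$, and such Type II sums cap the attainable saving at a small power of $X$. Concretely, the paper's own Type II estimate contributes $X^{29/36+(3c\s+2c)/24}$ to the prime sum, and requiring this to be $\ll X^{1-(c-1+c\s)/2}$ is inconsistent for every $\s\ge 0$ as soon as $c\ge 25/21$; for smaller $c$ it would yield a far weaker $\s$ than claimed. So the ``delicate bookkeeping'' you defer cannot close the argument --- the loss is structural, not computational.

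The missing idea is the device of Cai that the paper imports via \cite{rcb2}: Lemma \ref{lem:Let0<B<1}, which gives $\int_B^{2B}|V(y)|^2\,dy\ll XB+X^{2-c}\mc L$ (much better than Parseval when $B$ is small), combined in Lemma \ref{lem:suppan(1lenleX)}(ii) with a bound $U$ for the \emph{complete} sum, $\sum_{n\le X}e(x[n^c])\ll U+\mc LX^{1-c}\|x\|^{-1}$, to produce $\int_{\mf m}|V|^4\,dx\ll\mc L^4(V^2X^{2-c}+UX^2)$. In effect the Parseval factor $X$ is replaced by $X^{2-c}$, so the pointwise estimate one actually has to prove is only \eqref{eq:T(x)llX1-cs/2+e/4}, namely $T(x)\ll X^{1-c\s/2+\e/4}$ --- weaker than your requirement by exactly the factor $X^{(c-1)/2}$ --- together with the complete-sum bound \eqref{eq:sum1lenleXe(x[nc])} with $U=X^{2-c-c\s+\e/2}$; both of these are then within reach of the B-process, Heath-Brown's fifth-derivative estimate, and the Type I/II analysis. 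Two smaller inaccuracies: this problem has a single major arc $[-\omega,\omega]$ and no singular series (the main term is $\Gamma^2(1+\g)\Gamma(2\g)^{-1}n^{2\g-1}$, handled by Zhu's lemma), and the two terms in $\s$ arise respectively from the complete-sum estimate and from the Type I sum $S_1(h)$, not from a Type II balancing.
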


Our argument is an application of the Hardy-Littlewood method, with a single `major arc'
 \[\mc M := [-\omega, \omega] \, , \,
 \omega = N^{-\frac 13 - \e}.\]
The discussion of the major arc follows Zhu \cite{zhu}. On the minor arc
 \[m := [\omega, 1-\omega],\]
we need to give a good fourth power moment bound for the exponential sum
 \begin{equation}\label{eq:T(x)=sumpleNgamma}
T(x) = \sum_{p\le M^\g} \log p\, e(x[p^c]).
 \end{equation}
The calculations here are somewhat similar to material in \cite{rcb2}. A device of Cai \cite{cai} (see Lemma \ref{lem:suppan(1lenleX)} (i) below and an exponential sum bound of Heath-Brown \cite{hb2} (Lemma \ref{lem:Letkbeaninteger} below) play a key role.

 \section{Preparatory Lemmas}\label{sec:preplemmas}

We assume throughout that $1 < c < 24/19$ and that the positive number $\e$ is sufficiently small. Constants implied by `$\ll$' and `$O$' depend at most on $c$ and $\e$, unless otherwise stated. We write $A \asymp B$ when $A \ll B \ll A$.

 \begin{lem}\label{lem:Let0<x<1}
Let $0 < x < 1$. For real numbers $a_n$, $|a_n|\le 1$, let
 \[W(X,x) = \sum_{1 \le n \le X} a_n 
 e(x[n^c]).\]
Given $H$, $2 \le H \le X$, we have
 \begin{align*}
W(X,x) &\ll \frac{X\mc L}H + \sum_{0 \le h \le H} \min\left(1, \frac 1h\right) \Bigg|\sum_{1 \le n \le X} a_n e((h+\g)n^c)\Bigg|\\[2mm]
&\quad + \sum_{h=1}^\infty \min\left(\frac 1h\, , \, \frac H{h^2}\right) \Bigg|\sum_{1 \le n \le X} e(hn^c)\Bigg|.
 \end{align*}
Here $\g \in \{x, -x\}$, while (here and below), $\mc L = \log X$.
 \end{lem}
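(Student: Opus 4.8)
The plan is to strip off the integer part by writing $[n^c] = n^c - \{n^c\}$, so that
\[e(x[n^c]) = e(xn^c)\,e(-x\{n^c\}),\]
and then to Fourier-analyse the $1$-periodic function $\rho$ defined by $\rho(t) = e(-xt)$ on $[0,1)$. Since $\rho$ is $1$-periodic we have $\rho(n^c) = e(-x\{n^c\})$, and because $e(hn^c) = e(h\{n^c\})$ for integers $h$, any Fourier expansion $\rho(t) = \sum_h c_h e(ht)$ turns the factor $e(-x\{n^c\})$ into $\sum_h c_h e(hn^c)$, hence $e(x[n^c])$ into $\sum_h c_h e((h+x)n^c)$. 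The function $\rho$ is smooth on $(0,1)$ with a single jump of size $1-e(-x) = O(1)$ at the integers, and its Fourier coefficients are $c_h = \frac{1-e(-x)}{2\pi i(h+x)}$, whence $|c_h| \ll \min(1,1/|h|)$.

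First I would approximate $\rho$ by the partial sum $S_H(t) = \sum_{|h|\le H}c_h e(ht)$. The standard localization estimate for the Fourier partial sums of a piecewise-$C^1$ function whose only singularity is at the integers gives
\[\rho(t) = S_H(t) + O\!\left(\min\Big(1,\tfrac{1}{H\|t\|}\Big)\right),\]
where $\|t\|$ denotes the distance from $t$ to the nearest integer. Feeding this into $W(X,x) = \sum_n a_n e(xn^c)\rho(n^c)$ produces a main term $\sum_{|h|\le H}c_h\sum_n a_n e((h+x)n^c)$ and an error $O\big(\sum_{n\le X}b(n^c)\big)$, where $b(t) := \min(1,1/(H\|t\|))$. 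In the main term I split according to the sign of $h$: the terms $h\ge 0$ contribute, via $|c_h|\ll\min(1,1/h)$, exactly the $\g = x$ part of the middle displayed sum, while for $h = -h' < 0$ I use that the $a_n$ are real to write $|\sum_n a_n e((x-h')n^c)| = |\sum_n a_n e((h'-x)n^c)|$, giving the $\g = -x$ part. This accounts for the second sum in the statement.

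Next I would treat the error $\sum_{n\le X}b(n^c)$ by expanding the continuous, even, bounded-variation function $b$ in its own Fourier series $b(t) = \sum_h \hat b(h)e(ht)$, so that $\sum_{n\le X}b(n^c) = \hat b(0)X + \sum_{h\ne 0}\hat b(h)\sum_n e(hn^c)$. A direct computation gives $\hat b(0) = \int_{-1/2}^{1/2}b \ll (1+\log H)/H \ll \mc L/H$ (using $H\le X$), yielding the term $X\mc L/H$; integrating by parts once and then again (the second integration by parts picks up the jump of $b'$ at $\|t\| = 1/H$, which dominates for large $h$) gives $|\hat b(h)|\ll\min(1/|h|, H/h^2)$. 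Folding together $h$ and $-h$ then produces the third displayed sum, with coefficient $\min(1/h, H/h^2)$ and the pure exponential sums $\sum_n e(hn^c)$.

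The main obstacle is the error analysis rather than the algebra: one must justify the interchanges of summation (legitimate since $\rho$ and $b$ are of bounded variation, so their Fourier series converge and the rearrangements are valid, the jump values being harmlessly absorbed into the $O(1)$ error where $\|n^c\|$ is small), and one must establish the two quantitative estimates on which everything rests — the localization bound $|\rho - S_H|\ll \min(1,1/(H\|t\|))$ near the jump, and the coefficient bound $|\hat b(h)|\ll\min(1/|h|, H/h^2)$. The delicate points in both are the behaviour near $\|t\| = 0$ and near $\|t\| = 1/H$; away from these the estimates are routine. A convenient alternative that sidesteps the pointwise Gibbs analysis would be to replace the raw Dirichlet partial sum by a Vaaler/Fejér-kernel approximant to $\rho$, at the cost of a mild reorganisation of the error terms.
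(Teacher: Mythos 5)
Your proposal is correct and is essentially the argument behind the paper's proof: the paper gives no details, citing only \cite{rcb2}, Lemma 1 (``the proof is unchanged''), and that proof is precisely the standard truncated-Fourier-expansion argument you describe --- peel off $e(-x\{n^c\})$, truncate its Fourier series at height $H$ with pointwise error $\min\left(1, \frac{1}{H\|t\|}\right)$, then expand that error function in its own absolutely convergent Fourier series with $\hat b(0) \ll \mc L/H$ and $\hat b(h) \ll \min\left(\frac{1}{|h|}, \frac{H}{h^2}\right)$, the negative frequencies being folded into the $\g = -x$ terms via the realness of the $a_n$. The one spot deserving an explicit extra line is $h = -1$ in your coefficient bound, where the denominator $h + x = x - 1$ can be small; your explicit formula $c_h = \frac{1-e(-x)}{2\pi i(h+x)}$ saves you because the numerator satisfies $|1-e(-x)| = 2\sin(\pi(1-x)) \ll 1-x$ there, so the claimed bound $|c_h| \ll \min\left(1, \frac{1}{|h|}\right)$ does hold, but it does not follow merely from the jump being $O(1)$.
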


 \begin{proof}
This is a variant of \cite{rcb2}, Lemma 1, where the summation over $n$ runs over $\left(\frac X8, X\right]$. The proof is unchanged.
 \end{proof}

 \begin{lem}\label{lem:Letellge0}
Let $\ell \ge 0$ be a given integer, $L = 2^\ell$. Suppose that $f$ has $\ell + 2$ continuous derivatives on $\left[\frac X2, X\right]$ and
 \begin{equation}\label{eq:f(r)(x)|}
|f^{(r)}(x)| \asymp FX^{-r} \quad \left(r=1,\ldots, \ell + 2,\, x \in \left[\frac X2, X\right]\right).
 \end{equation}

Then for $[a,b] \subset \left[\frac X2, X\right]$, we have
 \begin{equation}\label{eq:sumalenleb e(f(n))}
\sum_{a\le n \le b} e(f(n)) \ll F^{1/(4L-2)} X^{1-(\ell+2)/(4L-2)} + F^{-1}X. 
 \end{equation}
The implied constant in \eqref{eq:sumalenleb e(f(n))} depends only on the implied constant in \eqref{eq:f(r)(x)|}.
 \end{lem}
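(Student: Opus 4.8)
The plan is to prove \eqref{eq:sumalenleb e(f(n))} by induction on $\ell$. The estimate is the van der Corput $(\ell+2)$-th derivative test, and the exponents $\tfrac{1}{4L-2}$ and $1-\tfrac{\ell+2}{4L-2}$ are precisely those of the exponent pair obtained by applying the van der Corput $A$-process $\ell$ times to the second-derivative pair $\left(\tfrac12,\tfrac12\right)$. Write $P_\ell=F^{1/(4L-2)}X^{1-(\ell+2)/(4L-2)}$ for the first term. For the base case $\ell=0$ (so $L=1$, $4L-2=2$) I would split according to the size of $|f'|\asymp FX^{-1}$. If $F\le X/2$, then $|f'|\le\tfrac12$ throughout; since $f''$ has constant sign $f'$ is monotonic and $\|f'\|=|f'|\asymp F/X$, so the first-derivative test (Kusmin--Landau) gives $\ll X/F=F^{-1}X$. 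If $F\ge X/2$, the classical second-derivative test gives $\ll X(FX^{-2})^{1/2}+(FX^{-2})^{-1/2}=F^{1/2}+XF^{-1/2}$, and $X/F\le2$ forces $XF^{-1/2}\ll F^{1/2}$; thus the bound is $\ll F^{1/2}=P_0$. This proves the case $\ell=0$.

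For the inductive step I would apply one round of Weyl--van der Corput differencing. For a parameter $H\in[1,X]$,
\[
\Big|\sum_{a\le n\le b}e(f(n))\Big|^2\ll\frac{X^2}{H}+\frac{X}{H}\sum_{1\le h\le H}\Big|\sum_{n}e\big(f(n+h)-f(n)\big)\Big|.
\]
Setting $g_h(n)=f(n+h)-f(n)$, the mean value theorem gives $g_h^{(r)}(x)=h f^{(r+1)}(\xi)$, whence $|g_h^{(r)}(x)|\asymp(hF/X)X^{-r}$ for $1\le r\le\ell+1$; thus $g_h$ obeys the hypotheses of the lemma with $\ell$ replaced by $\ell-1$ and $F$ replaced by $F'=hF/X$. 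Applying the inductive estimate to each inner sum, summing the clean powers $\sum_{h\le H}h^{1/(2L-2)}\asymp H^{1+1/(2L-2)}$, and balancing $X^2/H$ against the resulting main term would return exactly the exponent $1/(4L-2)$, i.e. the bound $P_\ell$.

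The delicate point, and the main obstacle, is to arrange the secondary term so that it collapses to $F^{-1}X$ with no logarithmic or spurious power losses. Feeding the secondary term $(F')^{-1}X=X^2/(hF)$ of the inductive hypothesis into the sum over $h$ produces $\sum_{h\le H}X^2/(hF)\asymp F^{-1}X^2\log X$, which after the factor $X/H$ contributes $X^3\log X/(HF)$ to the right-hand side---too lossy in general. I would circumvent this with a dichotomy against the threshold $F_0=X^{\ell+2^{-\ell}}$, the value at which the two main terms $P_{\ell-1}$ and $P_\ell$ coincide. For $F\le F_0$ one does not difference at all: the hypothesis already provides $\ell+1$ derivatives, so the inductive estimate gives $\ll P_{\ell-1}+F^{-1}X$, and a comparison of exponents shows $P_{\ell-1}\le P_\ell$ throughout this range, which yields \eqref{eq:sumalenleb e(f(n))}. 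For $F>F_0$ one runs the differencing above; since $F$ now exceeds $X^{3/2}$, the offending contribution $X^3\log X/(HF)$ is $\ll X^2/H$ and is absorbed, the logarithm disappears, and the optimization in $H$ delivers $P_\ell$ cleanly (here $F^{-1}X=O(1)$ is negligible). The remaining work---checking that the derivative asymptotics \eqref{eq:f(r)(x)|} survive differencing up to the endpoints, that the optimal $H$ is admissible in $[1,X]$, and that the bounded contributions from the $h$ with $F'\ge X/2$ are dominated by $P_\ell$---is routine.
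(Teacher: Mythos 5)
Your proposal is correct in substance, but it differs from the paper in the simplest possible way: the paper offers no argument at all, since Lemma \ref{lem:Letellge0} is quoted from Graham and Kolesnik \cite{grakol}, Theorem 2.9, and the paper's entire ``proof'' is that citation. What you have written is essentially a reconstruction of the standard proof of the cited theorem: induction on $\ell$ via the van der Corput $A$-process (Weyl differencing), with the Kusmin--Landau theorem and the second-derivative test furnishing the base case $\ell=0$. Your handling of the one genuine technical nuisance --- the logarithm produced by summing the secondary terms $X^2/(hF)$ over $h\le H$ --- is sound, and your crossover exponent is exactly right: writing $F=X^{\phi}$, the equation $P_{\ell-1}=P_{\ell}$ solves to $\phi=\ell+2^{-\ell}$, the ratio $P_{\ell-1}/P_{\ell}$ is increasing in $F$ (so $P_{\ell-1}\le P_{\ell}$ below the threshold, where the inductive bound with only $\ell+1$ derivatives suffices), and above the threshold one has $F\ge X^{3/2}$ for $\ell\ge1$, so that $X^3\log X/(HF)\ll X^2/H$ is absorbed; balancing then gives $H=(X^{\ell+2}F^{-1})^{1/(2L-1)}$, which lies in $[1,X]$ except when $F\ge X^{\ell+2}$, where $P_\ell\ge X$ makes \eqref{eq:sumalenleb e(f(n))} trivial. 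Two cosmetic caveats: the inequality $|f'|\le\tfrac12$ in your base case should be phrased with the implied constants of \eqref{eq:f(r)(x)|} in view (replace the threshold $X/2$ by $\delta X$ for suitable $\delta$), and you should note explicitly that the $\asymp$ constants for the differenced functions $g_h$ are uniform in $h$, which is what entitles you to a single implied constant throughout the induction --- this uniformity is precisely why the lemma's final sentence matters. What your route buys is self-containedness; what the paper's citation buys is brevity.
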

 
 \begin{proof}
Graham and Kolesnik \cite{grakol}, Theorem 2.9.
 \end{proof}
 
 \begin{lem}[B process]\label{lem:(Bprocess)} Suppose that $f'' < 0$ and, for $x \in \left[\frac X2, X\right]$,
 \[|f''(x)|\asymp FX^{-2}, \ f^{(j)}(x) \ll FX^{-j}
 \quad (j = 3,4).\]
Define $x_\nu$ by $f'(x_\nu) = \nu$ and let $\phi(\nu) = -f(x_\nu) + \nu x_\nu$. Then for $[a,b]\subset \left[\frac X2, X\right]$, we have
 \begin{align*}
\sum_{a \le n \le b} e(f(n)) &= \sum_{f'(b) \le \nu \le f'(a)}  \frac{e(\phi(\nu)-1/8)}{|f''(x_\nu)|^{1/2}}\\[2mm]
&\hskip .5in + O(\log(FX^{-1}+2) + F^{-1/2}X).
 \end{align*}

 \end{lem}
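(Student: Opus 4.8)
The plan is to prove this as the classical van der Corput $B$-process, by Poisson summation followed by the stationary-phase (saddle-point) method. Applying the exact Poisson summation formula to the smooth function $e(f(t))$ restricted to $[a,b]$ gives
\[
\sum_{a \le n \le b} e(f(n)) = \sum_{\nu \in \mathbb{Z}} \int_a^b e(f(t) - \nu t)\, dt + O(1),
\]
the $O(1)$ absorbing the half-weight endpoint corrections. Since $f'' < 0$, the derivative $f'$ is strictly decreasing, so the phase $g_\nu(t) = f(t) - \nu t$ has an interior stationary point $x_\nu$, with $f'(x_\nu) = \nu$, exactly when $f'(b) \le \nu \le f'(a)$. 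The first step is to isolate this range. For $\nu$ outside $[f'(b), f'(a)]$ the derivative $g_\nu'(t) = f'(t) - \nu$ is bounded away from $0$, and integration by parts (the first-derivative test) shows that these integrals are small and that they sum to $O(\log(FX^{-1} + 2))$; here I use that the $\nu$-range has length $|f'(a) - f'(b)| \asymp FX^{-1}$, which follows from $|f''| \asymp FX^{-2}$ together with $b - a \le X/2$.

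The substance of the argument is the evaluation of $\int_a^b e(f(t) - \nu t)\, dt$ for $f'(b) \le \nu \le f'(a)$ by stationary phase. Expanding $g_\nu$ about $x_\nu$, with saddle value $f(x_\nu) - \nu x_\nu = -\phi(\nu)$ and $g_\nu''(x_\nu) = f''(x_\nu) < 0$, the classical stationary-phase formula delivers a main term of the stated shape $e(\phi(\nu) - 1/8)\,|f''(x_\nu)|^{-1/2}$, with the sign of the $1/8$ fixed by $\mathrm{sgn}\, f'' = -1$ and the sign convention in $\phi$ matching the chosen Poisson normalization. This is the point at which the hypotheses $f^{(j)}(x) \ll FX^{-j}$ for $j = 3, 4$ are used: they bound the cubic and quartic Taylor remainders of $g_\nu$ near $x_\nu$, which is precisely what controls the error in the stationary-phase approximation and, crucially, makes that error uniform in $\nu$. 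Summing the main terms over $f'(b) \le \nu \le f'(a)$ reproduces the asserted sum $\sum_{f'(b) \le \nu \le f'(a)} e(\phi(\nu) - 1/8)\,|f''(x_\nu)|^{-1/2}$.

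It then remains to collect the errors. The accumulated non-stationary and secondary contributions sum to $O(\log(FX^{-1}+2))$, while the two endpoint values of $\nu$ — where $x_\nu$ lies near $a$ or $b$ and the saddle point merges with the boundary — are expected to contribute a single term of size $\asymp |f''|^{-1/2} \asymp F^{-1/2}X$, accounting for the remaining part of the stated error. The hard part is exactly this stationary-phase estimate with an error term that stays uniform as $x_\nu$ approaches the endpoints: a naive saddle-point bound degenerates there, and one must treat boundary saddle points separately (via a careful truncated Poisson argument) rather than applying the interior formula blindly. Since this is the standard $B$-process, the cleanest route is to verify that the hypotheses $|f''| \asymp FX^{-2}$ and $f^{(j)} \ll FX^{-j}$ $(j=3,4)$ are exactly the requirements of the saddle-point lemmas in the classical treatments of Titchmarsh or Graham--Kolesnik, and to quote their precise formulation, which already incorporates the uniform endpoint analysis.
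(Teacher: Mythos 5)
Your proposal is correct and ends where the paper does: the paper's entire proof of this lemma is the citation ``Graham--Kolesnik, Lemma 3.6,'' whose statement is verbatim the lemma here, and your concluding recommendation to quote that precise classical formulation is exactly what the author does. Your Poisson-summation/stationary-phase sketch is a faithful outline of how that lemma is proved, including the correct identification of the non-stationary contribution $O(\log(FX^{-1}+2))$, the endpoint-saddle contribution of size $F^{-1/2}X$, and the role of the bounds on $f^{(3)}$ and $f^{(4)}$ in making the stationary-phase error uniform in $\nu$.
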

 
 \begin{proof}
\cite{grakol}, Lemma 3.6.
 \end{proof}

 \begin{lem}\label{lem:Letkbeaninteger}
Let $k$ be an integer, $k \ge 3$. Let $f$ have continuous derivatives $f^{(j)}$ $(1 \le j \le k)$ on $[0,N]$,
 \[|f^{(k)}(x)| \asymp \l_k \quad \text{ on } (0, N].\]
Then for $(a,b] \subset (0,N]$,
 \[\sum_{a \le n \le b} e(f(n)) \ll N^{1+\e}
 \left(\l_k^{\frac 1{k(k-1)}} + N^{-\frac 1{k(k-1)}}
 + N^{-\frac 2{k(k-1)}} \l_k^{-\frac 2{k^2(k-1)}}\right).
 \]
 \end{lem}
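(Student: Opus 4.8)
The estimate is Heath-Brown's $k$-th derivative bound, and the natural route is through Vinogradov's mean value theorem in its optimal (``main conjecture'') form, now a theorem of Bourgain--Demeter--Guth and Wooley: for integers $s, K \ge 1$,
\[
J_{s,K}(H) := \int_{[0,1]^K}\Bigl|\sum_{m \le H} e(\theta_1 m + \cdots + \theta_K m^K)\Bigr|^{2s}\,d\boldsymbol\theta \ll H^\e\bigl(H^s + H^{2s - K(K+1)/2}\bigr).
\]
The plan is first to reduce, by a dyadic decomposition, to the sum $S = \sum_{a\le n\le b} e(f(n))$ over an interval of length $\asymp N$ on which $|f^{(k)}| \asymp \l_k$. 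I would then partition $(a,b]$ into $R \asymp N/H$ blocks of length $H$ and, on each block $(x_r, x_r + H]$, Taylor-expand $f(x_r + m)$ about $x_r$. Since only $k$ derivatives are available, I expand to degree $k-1$, the remainder being $\asymp \l_k m^k \le \l_k H^k$; choosing $H \asymp \l_k^{-1/k}$ makes this $O(1)$, so on each block
\[
\sum_{x_r < n \le x_r + H} e(f(n)) = \sum_{0 < m \le H} e\bigl(P_r(m)\bigr) + O(1), \qquad P_r(m) = \sum_{j=1}^{k-1}\frac{f^{(j)}(x_r)}{j!}\,m^j,
\]
a Weyl polynomial of degree $K = k-1$.

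Next I would apply Hölder's inequality in the $r$-sum with exponent $2s$, at the critical value $s = K(K+1)/2 = k(k-1)/2$, to obtain $|S|^{2s} \ll R^{2s-1}\,\Sigma$ with $\Sigma = \sum_r |\sum_{m\le H} e(P_r(m))|^{2s}$. Expanding $\Sigma$ and summing over $r$ first, the ``diagonal'' frequencies (those satisfying the Vinogradov system $\sum_i m_i^j = \sum_i (m_i')^j$, $1\le j \le K$) contribute $R\,J_{s,K}(H) \ll R\,H^{s+\e}$, since $2s - K(K+1)/2 = s$ at the critical exponent, and this produces the main term $\l_k^{1/(k(k-1))}N^{1+\e}$ after noting $2s = K(K+1) = k(k-1)$. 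The point where everything rests is the off-diagonal contribution: here one uses the \emph{lower} bound $|f^{(k)}| \gg \l_k$ to ensure that the leading coefficients $f^{(k-1)}(x_r)/(k-1)!$ are genuinely spread out as $r$ varies (consecutive values differ by $\asymp \l_k H$), so that the inner sum $\sum_r e(\sum_j (f^{(j)}(x_r)/j!)D_j)$ exhibits cancellation unless the difference vector $\mathbf D$ is small. Converting this discrete sum over blocks into the continuous mean value — the essential ``spacing'' step — is what I expect to be the main obstacle, since it requires tracking the joint distribution of all the coefficients $f^{(j)}(x_r)/j!$, not merely the top one, while only the $k$-th derivative is under asymptotic control.

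Finally I would collect the three terms by optimizing the free parameters across the ranges of $\l_k$. The choice $H \asymp \l_k^{-1/k}$ is legitimate only when $1 \le H \le N$, i.e.\ $N^{-k} \le \l_k \le 1$; the analysis above then governs this principal regime and yields the first term. For small $\l_k$ the admissible block length is forced up to the whole interval ($H \asymp N$), where a single application of the mean value to a length-$N$ polynomial sum saturates at the floor $N^{1 - 1/(k(k-1)) + \e}$, giving the second term $N^{-1/(k(k-1))}$; and the extreme regime, where the trivial bound $|S|\le N$ begins to compete, supplies the third term $(N^k\l_k)^{-2/(k^2(k-1))} = N^{-2/(k(k-1))}\l_k^{-2/(k^2(k-1))}$. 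Taking the maximum of these contributions reproduces exactly the stated three-term bound.
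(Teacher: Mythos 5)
The paper does not prove this lemma at all: its proof is the single line ``Heath-Brown \cite{hb2}, Theorem 1,'' so what you are attempting is a reconstruction of Heath-Brown's own argument. Your plan is indeed the right family of ideas --- block decomposition, Taylor approximation by degree-$(k-1)$ polynomials, the Bourgain--Demeter--Guth/Wooley form of Vinogradov's mean value theorem at the critical exponent, and spacing of the leading Taylor coefficients coming from $|f^{(k)}|\asymp\lambda_k$. The problem is that your proposal stops at exactly the point where the proof lives. The passage from the discrete family $\sum_r\bigl|\sum_{m\le H}e(P_r(m))\bigr|^{2s}$ to the mean value $J_{s,k-1}(H)$ --- via the fact that a degree-$(k-1)$ Weyl sum of length $H$ is essentially constant on boxes of dimensions $H^{-1}\times\cdots\times H^{-(k-1)}$, together with a covering-multiplicity count that balances the spacing $\asymp\lambda_k H$ of the top coefficients against the box width $H^{-(k-1)}$ and counts wrap-arounds modulo $1$ (a factor of roughly $1+\lambda_k N$) --- is what produces the particular three-term shape of the bound. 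You explicitly defer this step as ``the main obstacle'' and never carry it out; without it there is no proof, and the stated exponents cannot be recovered by optimizing $H$ alone.

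The bookkeeping you place around the missing step also fails in concrete ways. First, a Taylor remainder of size $O(1)$ does not give ``block sum $=$ polynomial sum $+\,O(1)$'': if $E_r(m)\asymp 1$, the factor $e(E_r(m))$ can rotate each term arbitrarily. You need $\lambda_k H^k=o(1)$, or else you must remove $e(E_r(m))$ by partial summation using its total variation $\ll\lambda_k H^k$ in $m$ (fixable, but an error as written). Second, with your choice $H\asymp\lambda_k^{-1/k}$, a diagonal-only count gives $|S|^{2s}\ll R^{2s-1}\cdot RH^{s+\varepsilon}$, i.e.\ $|S|\ll NH^{-1/2+\varepsilon}\ll N^{1+\varepsilon}\lambda_k^{1/(2k)}$, which agrees with the first term $N^{1+\varepsilon}\lambda_k^{1/(k(k-1))}$ only when $k=3$; for $k\ge4$ it is sharper than the lemma's first term, which shows your accounting does not produce the stated exponents and that the off-diagonal contribution (the part you skipped) is not negligible but dominant. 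Third, the floor term $N^{1-1/(k(k-1))}$ cannot come from ``a single application of the mean value to a length-$N$ polynomial sum'': $J_{s,K}$ is an average over all coefficient vectors and yields no pointwise bound for an individual polynomial sum --- with integer coefficients such a sum equals $N$. That term arises instead as the intrinsic H\"older loss $R^{1-1/(2s)}$ of the block method, and your third term is likewise produced by exponent-matching rather than derivation. In short, you have identified the correct strategy, but none of the three terms in the bound is actually proved.
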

 
 \begin{proof}
Heath-Brown \cite{hb2}, Theorem 1.
 \end{proof}
 
 \begin{lem}\label{lem:Let0<B<1}
Let $0 < B < 1$ and $|c_n| \le 1$. Let
 \[V(x) = \sum_{1 \le n \le X} c_n e([n^c]x).\]
Then
 \[\int_B^{2B} |V(y)|^2 dy \ll XB + X^{2-c}\mc L.\]
 \end{lem}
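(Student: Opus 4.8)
The plan is to estimate the mean square $\int_B^{2B}|V(y)|^2\,dy$ by expanding the square and reducing the contribution of the integer-part function $[n^c]$ to a smooth exponential sum. First I would write
\[
\int_B^{2B}|V(y)|^2\,dy = \sum_{1\le m,n\le X} c_m\overline{c_n}\int_B^{2B} e\bigl(([m^c]-[n^c])y\bigr)\,dy,
\]
and evaluate the inner integral exactly. For the diagonal terms $[m^c]=[n^c]$ the integral contributes $O(B)$ each; for the off-diagonal terms where $[m^c]\ne[n^c]$ the integral is $O(1/|[m^c]-[n^c]|)$, since $\int_B^{2B}e(\ell y)\,dy\ll\min(B,1/|\ell|)$ for an integer $\ell=[m^c]-[n^c]$.

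Next I would bound the diagonal contribution. The number of pairs $(m,n)$ with $1\le m,n\le X$ and $[m^c]=[n^c]$ is $O(X)$ when $c>1$, because for each value $k=[m^c]$ the number of $n$ with $[n^c]=k$ is bounded by the length of the interval $\{t:\ k\le t^c<k+1\}$, which has length $\asymp k^{\g-1}\ll 1$ once $n$ is not too small, together with an $O(1)$ endpoint correction and the trivial $O(X)$ from the $m=n$ terms. Summing gives a total diagonal contribution of $O(XB)$, matching the first term in the claimed bound.

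For the off-diagonal terms I would group according to the integer difference $\ell=[m^c]-[n^c]$, so that the relevant quantity is $\sum_{\ell\ne 0}\frac{1}{|\ell|}\,r(\ell)$, where $r(\ell)$ counts pairs $(m,n)$ with $[m^c]-[n^c]=\ell$. Writing $[m^c]=m^c-\{m^c\}$, the condition $[m^c]-[n^c]=\ell$ forces $m^c-n^c$ to lie in an interval of length $1$ about $\ell$; for fixed $m$ this pins $n$ to an interval of length $\ll m^c n^{-(c-1)}\cdot\!\bigl(\text{something}\bigr)$, and a more careful count of solutions to $|m^c-n^c-\ell|<1$ leads, after summing the weight $1/|\ell|$ and using $\mc L=\log X$, to the bound $O(X^{2-c}\mc L)$. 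The logarithm enters precisely through the harmonic-type sum $\sum_\ell 1/|\ell|$ over the admissible range of $\ell$, which is of size up to $\asymp X^c$.

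The main obstacle will be the careful bookkeeping in the off-diagonal count: one must show that for each $m$ the number of $n\le X$ with a prescribed value of $[m^c]-[n^c]$, weighted by $1/|\ell|$, sums to the stated size, controlling both the density of $n$ near the diagonal and the accumulation of the weight $1/|\ell|$ across all nonzero differences. The key analytic input is the convexity estimate $c^cm^{c-1}\asymp$ for the spacing of consecutive values $n^c$, which guarantees that each integer difference $\ell$ is represented $\ll$ a bounded-on-average number of times; carrying this through uniformly in $B$ is the delicate point, and it is exactly here that the factor $X^{2-c}\mc L$ rather than $X^{2-c}$ emerges.
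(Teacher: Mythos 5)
The paper itself offers no proof of this lemma --- it is quoted verbatim from \cite{rcb2}, Lemma 12 --- and your opening moves (expanding $|V|^2$, integrating term by term, using $\int_B^{2B}e(\ell y)\,dy\ll\min(B,1/|\ell|)$, and noting that the diagonal $[m^c]=[n^c]$ contributes $O(XB)$, since for $c>1$ the map $n\mapsto[n^c]$ is essentially injective) are certainly the standard and intended route. The genuine gap is in your off-diagonal estimate. Your stated ``key analytic input'' --- that each integer difference $\ell=[m^c]-[n^c]$ is represented a \emph{bounded-on-average} number of times --- is false. The number of off-diagonal pairs with $[m^c]-[n^c]\le L$ is of order $LX^{2-c}$ throughout the range $X^{c-1}\ll L\ll X^c$ (for $m\asymp X$ the condition allows $m-n\ll LX^{1-c}$ choices of $n$), so $r(\ell)$ has average size $X^{2-c}$, not $O(1)$. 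Indeed, if your claim were true, your own argument would yield $\int_B^{2B}|V|^2\,dy\ll XB+\mathcal{L}$, which is false: taking $c_n\equiv 1$ and $B=X^{-c}/100$, every phase satisfies $[n^c]y\le 1/50$ on $[B,2B]$, so $|V(y)|\gg X$ there and the integral is $\gg X^2B\gg X^{2-c}$. What the spacing $(n+1)^c-n^c\asymp n^{c-1}$ actually gives you is only $r(\ell)\ll X$ for each fixed $\ell$ ($O(1)$ values of $m$ per $n$), and feeding that into $\sum_{\ell\ne 0}r(\ell)/|\ell|$ produces $X\mathcal{L}$, which is weaker than the claimed $X^{2-c}\mathcal{L}$ since $c>1$. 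The vague interval-length expression ``$\ll m^cn^{-(c-1)}\cdot(\text{something})$'' is a symptom: the argument never extracts the factor $X^{1-c}$ that must come from the size of the differences, not from their multiplicities.

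The repair is simple and bypasses $r(\ell)$ entirely: sum over pairs directly, parametrized by $m$ and $k=m-n$. For $1\le n<m\le X$ one has $m^c-n^c\ge\kappa\,m^{c-1}(m-n)$ with $\kappa=c2^{-c}$; hence, apart from the $O_c(1)$ pairs with $m\le C_c:=(4/\kappa)^{1/(c-1)}$, one has $m^c-n^c\ge 2$ and therefore $[m^c]-[n^c]\ge m^c-n^c-1\ge\tfrac{\kappa}{2}\,m^{c-1}(m-n)$. Consequently
\[
\sum_{\substack{1\le n<m\le X\\ m>C_c}}\frac{1}{[m^c]-[n^c]}
\ \ll\ \sum_{m\le X}m^{1-c}\sum_{1\le k<m}\frac1k
\ \ll\ \mathcal{L}\sum_{m\le X}m^{1-c}\ \ll\ X^{2-c}\mathcal{L},
\]
the finitely many exceptional pairs contributing $O_c(1)$; together with the diagonal term $XB$ this proves the lemma. (Your grouping by $\ell$ can also be saved, but only by proving the dyadic count $\#\{(m,n):[m^c]-[n^c]\in[L,2L)\}\ll\min\bigl(X,L^{1/(c-1)}\bigr)+LX^{2-c}$ and summing over dyadic $L$ --- which again rests on the pointwise lower bound $[m^c]-[n^c]\gg m^{c-1}(m-n)$, not on any bounded-multiplicity statement.)
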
 

 \begin{proof}
\cite{rcb2}, Lemma 12.
 \end{proof}

Let $\|\t\|$ denote distance of $\t$ from the nearest integer.

 \begin{lem}\label{lem:suppan(1lenleX)}
(i) Suppose that $a_n$ $(1 \le n \le X)$ are complex numbers with $a_n \ll \mc L$. Let
 \[V(n) = \sum_{1 \le n \le X} a_n e(x[n^c]).\]
Suppose that $U > 0$ satisfies
 \[\sum_{1\le n \le X} e(x[n^c]) \ll U + \mc LX^{1-c}
 \|x\|^{-1} \quad (0 < |x| \le 2).\]
Then for any Borel measurable bounded function $G$ on $[\omega, 1-\omega]$, we have
 \begin{align}
&\left|\int_\omega^{1-\omega} V(x)G(x) dx\right|^2 \label{eq:|intomega1-omegaV(x)G(x)dx|2}\\[2mm]
&\qquad \ll \mc L^4 X^{2-c} \int_\omega^{1-\omega} |G(x)|^2 dx + \mc L^2UX \left(\int_\omega^{1-\omega} |G(x)| dx\right)^2.\notag
 \end{align}
(ii) Suppose further that
 \[V(x) \ll V \quad (x\in [\omega, 1-\omega]).\]
Then
 \[\int_\omega^{1-\omega} |V(x)|^4 dx \ll
 \mc L^4(V^2 X^{2-c} + UX^2).\]
 \end{lem}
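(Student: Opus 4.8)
The plan is to establish (i) by Cauchy--Schwarz, converting the integral against $G$ into an estimate for a self-correlation kernel, and then to deduce (ii) by applying (i) to a $G$ built from $V$. For (i), write $I=\int_\omega^{1-\omega}V(x)G(x)\,dx=\sum_{1\le n\le X}a_nc_n$, where $c_n=\int_\omega^{1-\omega}e(x[n^c])G(x)\,dx$. Since $a_n\ll\mc L$, Cauchy--Schwarz gives
\[
|I|^2\le\left(\sum_{1\le n\le X}|a_n|^2\right)\sum_{1\le n\le X}|c_n|^2\ll X\mc L^2\sum_{1\le n\le X}|c_n|^2 .
\]
Expanding $|c_n|^2=c_n\overline{c_n}$ and summing in $n$ produces the kernel $K(t)=\sum_{1\le n\le X}e(t[n^c])$, namely
\[
\sum_{1\le n\le X}|c_n|^2=\int_\omega^{1-\omega}\int_\omega^{1-\omega}G(x)\overline{G(y)}\,K(x-y)\,dx\,dy ,
\]
and this $K$ is exactly the exponential sum controlled by the hypothesis, applied with argument $x-y$, which satisfies $0<|x-y|<1$ off the diagonal.

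The crux is bounding this double integral, and the main obstacle is the non-integrable singularity of the bound $\mc L X^{1-c}\|x-y\|^{-1}$ as $y\to x$, so that the hypothesis is unusable across the diagonal. I would therefore split according to whether $\|x-y\|\le X^{-c}$ or $\|x-y\|>X^{-c}$. On the near-diagonal part I use only the trivial bound $|K|\le X$; since $\{y:\|x-y\|\le X^{-c}\}$ has measure $\ll X^{-c}$, the inequality $|G(x)||G(y)|\le\tfrac12(|G(x)|^2+|G(y)|^2)$ together with symmetry gives a contribution $\ll X^{1-c}\int_\omega^{1-\omega}|G|^2$. On the remaining part the hypothesis applies: its $U$-term contributes $\ll U\left(\int_\omega^{1-\omega}|G|\right)^2$, while the term $\mc L X^{1-c}\|x-y\|^{-1}$ contributes $\ll\mc L^2X^{1-c}\int_\omega^{1-\omega}|G|^2$, using $\int_{\|t\|>X^{-c}}\|t\|^{-1}\,dt\ll\mc L$. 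Combining these with the displayed Cauchy--Schwarz bound yields (i).

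For (ii) I would apply (i) with $G=\overline{V}\,|V|^2$, so that $VG=|V|^4\ge0$ and hence $\int_\omega^{1-\omega}|V|^4\,dx=I=|I|$. Part (i) then gives
\[
\left(\int_\omega^{1-\omega}|V|^4\right)^2\ll\mc L^4X^{2-c}\int_\omega^{1-\omega}|V|^6+\mc L^2UX\left(\int_\omega^{1-\omega}|V|^3\right)^2 .
\]
The pointwise bound $V(x)\ll V$ gives $\int|V|^6\le V^2\int|V|^4$, while Cauchy--Schwarz gives $\left(\int|V|^3\right)^2\le\left(\int|V|^2\right)\int|V|^4$. It then remains to note $\int_\omega^{1-\omega}|V|^2\ll\mc L^2X$, which follows from Lemma~\ref{lem:Let0<B<1} applied over the $O(\mc L)$ dyadic subintervals of $[\omega,1]$ (after factoring out $\mc L$ to normalise the coefficients), the $XB$-terms summing geometrically to $O(\mc L^2X)$ and the $X^{2-c}\mc L$-terms being lower order since $c>1$. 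Substituting these three estimates and dividing through by $\int_\omega^{1-\omega}|V|^4$ leaves $\int_\omega^{1-\omega}|V|^4\ll\mc L^4\left(V^2X^{2-c}+UX^2\right)$, which is (ii). The one place where care is needed with logarithmic factors is this last reduction, since the sharp bound $\int|V|^2\ll\mc L^2X$ (rather than a weaker power of $\mc L$) is what keeps the final exponent of $\mc L$ equal to $4$.
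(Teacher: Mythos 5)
Your proposal is correct and takes essentially the same route as the paper: your part (i) — Cauchy--Schwarz over the coefficients, the autocorrelation kernel $K(x-y)=\sum_n e((x-y)[n^c])$, the symmetrization $|G(x)||G(y)|\le\tfrac12(|G(x)|^2+|G(y)|^2)$, and the diagonal splitting at $\|x-y\|\le X^{-c}$ — is precisely the argument the paper imports from \cite{rcb2}, Lemma 13 (where the two regimes appear as $\min\left(X,X^{1-c}\|x-y\|^{-1}\right)$), and your part (ii) uses the identical choice $G=\overline{V}|V|^2$ with the same three estimates, including the $\int|V|^2\ll X\mathcal{L}^2$ bound from Lemma \ref{lem:Let0<B<1}. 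The only difference is that you write out in full the kernel step the paper cites from earlier work.
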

 
 \begin{proof}
Arguing as in \cite{rcb2}, proof of Lemma 13, the left-hand side of \eqref{eq:|intomega1-omegaV(x)G(x)dx|2} is
 \begin{align*}
&\ll X\mc L^2 \int_\omega^{1-\omega} |G(y)| \int_\omega^{1-\omega} |G(x)| U\, dx\\[2mm]
&+ X\mc L^3 \int_\omega^{1-\omega} |G(y)| \int_\omega^{1-\omega} |G(x)| \min \left(X, \frac{X^{1-c}}{\|x-y\|}\right) dxdy. 
 \end{align*}
It now suffices to show that
 \begin{align}
\int_\omega^{1-\omega} &|G(y)| \int_\omega^{1-\omega} |G(x)| \min \left(X, \frac{X^{1-c}}{\|x-y\|}\right) dxdy \label{eq:intomega1-omega|G(y)|}\\[2mm]
&\ll X^{1-c} \mc L \int_\omega^{1-\omega} |G(x)|^2 dx.\notag
 \end{align}
The left-hand side of \eqref{eq:intomega1-omega|G(y)|} is
 \begin{align*}
&\le \frac 12 \int_\omega^{1-\omega} \int_\omega^{1-\omega} (|G(x)|^2 + |G(y)|^2) \min \left(X, \frac{X^{1-c}}{\|x-y\|}\right)dxdy\\[2mm]
&= \int_\omega^{1-\omega} |G(x)|^2 \int_\omega^{1-\omega} \min\left(X, \frac{X^{1-c}}{\|x-y\|}\right) dxdy.
 \end{align*}
It is a straightforward matter to show that the last inner integral is $\ll X^{1-c}\mc L$, and \eqref{eq:intomega1-omega|G(y)|} follows.

For part (ii) we take $G(x) = \overline V(x)|V(x)|^2$. Now
 \[\int_\omega^{1-\omega}|G(x)|^2dx \ll V^2 
 \int_\omega^{1-\omega} |V(x)|^4dx\]
while Cauchy's inequality together with Lemma \ref{lem:Let0<B<1} yields
 \begin{align*}
\int_\omega^{1-\omega} |G(x)|dx &\le \int_\omega^{1-\omega} |V(x)|^2dx \int_\omega^{1-\omega} |V(x)|^4 dx\\[2mm]
&\ll X\mc L^2 \int_\omega^{1-\omega} |V(x)|^4dx.
 \end{align*}
Now part (i) yields
 \[\left(\int_\omega^{1-\omega} |V(x)|^4 dx\right)^2
 \ll \int_\omega^{1-\omega} |V(x)|^4 dx \ \mc L^4
 (X^{2-c} V^2 + UX^2),\]
and (ii) follows.
 \end{proof}
 
 \begin{lem}\label{lem:LetGbeacomplexfunction}
Let $G$ be a complex function on $[1,X]$. Let $u \ge 1$, $v$, $z$ be numbers satisfying $u^2 \le z$, $128 uz^2 \le X$, and $2^{20}X \le v^3$. Then
 \[\sum_{1 \le n \le X} \Lambda(n) G(n)\]
is a linear combination (with bounded coefficients) of $O(\mc L)$ sums of the form

 \[\sum_m a_m \sum_{n\ge z} (\log n)^4 G(mn)\]
with $h=0$ or 1, $|a_m| \le d(m)^5$, together with $O(\mc L^3)$ sums of the form
  \[\underset{1 \le mn \le X}{\sum_m a_n \sum_{u\le n \le v}}
  b_n G(mn)\]
in which $|a_m| \le d(m)^5$, $|b_n| \le d(n)^5$. Here $d(\cdots)$ is the divisor function.
 \end{lem}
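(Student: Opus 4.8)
The statement is a Type~I/Type~II decomposition of a von Mangoldt--weighted sum, so my plan is to feed $\Lambda$ into a standard combinatorial identity and then sort the resulting multilinear sums according to the sizes of their variables. First I would apply the Heath-Brown identity with $k=3$ (equivalently, an iterated Vaughan identity), truncating the $\mu$-factors at level $v$: for $n \le X$ this writes $\Lambda(n)$ as a bounded linear combination of $O(1)$ Dirichlet convolutions of the shape $\sum_{m_1\cdots m_r\,\ell_1\cdots\ell_s = n}\mu(m_1)\cdots\mu(m_r)(\log\ell_1)$, with at most three $\mu$-factors (each $\le v$) and three smooth factors, exactly one of which carries a logarithm. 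The hypothesis $2^{20}X\le v^3$ comfortably guarantees that the truncated identity is valid throughout $n\le X$. Since at most six factors occur, any product of a sub-collection of the variables carries a coefficient bounded by $d_6\le d(\cdot)^5$, which is the source of the bounds $|a_m|\le d(m)^5$ and $|b_n|\le d(n)^5$.

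Next I would localise the variables to dyadic blocks, at a cost of one factor of $\mc L$ per block, and assign the factors to an ``$m$-group'' and an ``$n$-group'' so that each piece matches one of the two model sums. In the Type~I case I keep the long smooth variable as a single unsplit sum over $n\ge z$, decomposing only the $m$-side, which is what produces the $O(\mc L)$ families and lets the logarithmic weight on the long variable be absorbed comfortably into the smooth majorant $(\log n)^4$; in the Type~II case both the $m$-side and the window $u\le n\le v$ are split dyadically, accounting for the $O(\mc L^3)$ families. The coefficients of each group, being convolutions of at most five arithmetic functions each bounded by $1$ or by $\log$, are majorised by $d(\cdot)^5$ as required.

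The heart of the matter, and the step I expect to be the main obstacle, is the combinatorial dichotomy: one must verify that \emph{every} dyadic configuration $(N_1,\dots)$ with $\prod N_i \asymp X$ can be realised as \emph{either} a Type~I \emph{or} a Type~II sum. The structural fact that makes this feasible is that $v^3\ge 2^{20}X$ forces at most two of the factors to exceed $v$, since three such factors would already overshoot $X$. I would then argue greedily on the ordered sizes: the factors not exceeding $v$ can be accumulated into a sub-product, and if that sub-product can be trapped in the window $[u,v]$ it becomes the $n$-variable of a Type~II sum, the remaining factors being absorbed into $a_m$; the hypotheses $u^2\le z$ and $128\,uz^2\le X$ are precisely what prevent the greedy product from overshooting and what bound the leftover $m$-part. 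When no sub-product can be trapped in $[u,v]$, the same size bookkeeping forces a single smooth factor of size $\ge z$ to survive, and it serves as the long variable of a Type~I sum. Checking that the three inequalities leave no configuration uncovered, and tracking the dyadic factors of $2$ that pin down the explicit constants $128$ and $2^{20}$, is where the genuine care lies; no analytic input beyond this bookkeeping enters the lemma itself.
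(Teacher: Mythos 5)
The paper does not prove this lemma at all: it simply cites Heath-Brown \cite{hb1}, pp.~1367--1368, so your task was in effect to reconstruct Heath-Brown's argument, and your framework (Heath-Brown's identity with $k=3$, dyadic localisation, then sorting configurations into Type~I and Type~II) is indeed the method of the cited pages. But the entire content of the lemma is the combinatorial covering claim, and there your proposal has a genuine gap: you defer it explicitly (``where the genuine care lies''), and the mechanism you sketch for it is wrong. You assert that $u^2\le z$ and $128uz^2\le X$ ``prevent the greedy product from overshooting'' the window $[u,v]$. They do not: the hypotheses allow $u^2\gg v$, and this is exactly the situation in the present paper's application, where $u=X^{1/5}/128$, $z=X^{2/5}$, $v=128X^{1/3}$, so $u^2\asymp X^{2/5}$ vastly exceeds $v\asymp X^{1/3}$. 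Hence a greedy accumulation of factors each $<u$ can jump in one step from just below $u$ to nearly $u^2$, clearing $[u,v]$ entirely; and in that event every smooth factor may still be $<z$, so your fallback (``a single smooth factor of size $\ge z$ survives'') is not forced either --- it is the desired conclusion, not something your bookkeeping delivers.

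What the cited argument actually does is allow overshoots and count them. Take the $\mu$-truncation level at most $v/64$ (possible since $v^3\ge 2^{20}X$), so that any single factor exceeding $v$ is smooth, hence lies in $(v,z)$ unless Type~I applies. Run the greedy on the factors in decreasing order until the partial product first reaches $u$: either it lands in $[u,v]$ (Type~II, done), or the accumulated block lies in $(v,u^2]\subseteq(v,z]$ --- this is where $u^2\le z$ enters: it caps the overshoot at $z$, not at $v$. Peel that block off and rerun the greedy on the remaining factors. The hypothesis $128uz^2\le X$ guarantees that after two failed peels, each of size $<z$, the remaining product still exceeds $64u$, so a third run is possible; and $2^{20}X\le v^3$ forbids a third failure, since three disjoint blocks each exceeding $v$ would have product $>v^3\ge 2^{20}X$, larger than the total available. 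So within three rounds one lands in $[u,v]$. This peel-and-count argument, with the powers of $2$ absorbing the dyadic slop (the source of the constants $128$ and $2^{20}$), is the substance of Heath-Brown's two pages and is absent from your proposal. A smaller point: the logarithm on the smooth variable must be removed by partial summation before grouping; if it is folded into the coefficients of the $m$-group, the claimed bound $|a_m|\le d(m)^5$ fails.
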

 
 \begin{proof}
Heath-Brown \cite{hb1}, pp.~1367--1368.
 \end{proof}

Throughout the rest of the paper, let $2 \le M \le N$. We write $X = M^\g$ and recall the definition of $T(x)$ in \eqref{eq:T(x)=sumpleNgamma}.
 
 \begin{lem}\label{lem:Wehave,fornin[M/2,M]}
We have, for $n \in [M/2, M]$,
 \[\int_{-\omega}^\omega T(x)^2 e(-xn)dx =
 \frac{\Gamma^2(1+\g)}{\Gamma(2\g)}\, n^{2\g-1}
 + 0 \left(M^{2\g-1} \exp\left(-a\left(
 \frac{\log M}{\log\log M}\right)^{1/3}\right)\right),\]
where $a$ is a positive constant. The implied constant depends on $c$ and $a$.
 \end{lem}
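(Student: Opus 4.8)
The plan is to exploit the integer structure of $[p^c]$ by reorganising $T$ as a sum over integers. First I would write
\[T(x) = \sum_{1 \le m \le M} S(m)\, e(mx), \qquad S(m) = \t((m+1)^\g) - \t(m^\g),\]
where $\t$ is the Chebyshev prime-counting function; this uses that $[p^c] = m \iff m^\g \le p < (m+1)^\g$ together with $p \le X = M^\g \iff [p^c] \le M$. The floor is thus absorbed into the arithmetic of $S(m)$ while $e(mx)$ now carries an integer frequency, which is essential: discarding the floor (replacing $e(x[p^c])$ by $e(xp^c)$) loses the integrality that forces $[p_1^c]+[p_2^c]=n$ and is far too lossy. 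I would then split off the prime-number-theorem main term, writing $S(m) = \g m^{\g-1} + r(m)$, where $r(m) = \t((m+1)^\g)-\t(m^\g) - \g m^{\g-1}$ is (up to the negligible $(m+1)^\g - m^\g - \g m^{\g-1} = O(m^{\g-2})$) the short-interval prime error. With $T = \rho + E$, $\rho(x) = \sum_{m\le M}\g m^{\g-1}e(mx)$ and $E(x)=\sum_m r(m)e(mx)$, I expand $T^2 = \rho^2 + 2\rho E + E^2$ and expect the claimed main term to arise entirely from $\int_{-\omega}^\omega \rho^2 e(-nx)\,dx$.

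For that main term I would replace the Riemann sum $\rho$ by $I(x) = \g\int_0^M t^{\g-1}e(xt)\,dt = \int_0^X e(xv^c)\,dv$ (Euler–Maclaurin; the integrable singularity at $t=0$ is handled by treating $t\le 1$ separately), and then extend the range of integration:
\[\int_{-\omega}^\omega I(x)^2 e(-nx)\,dx = \int_{-\infty}^\infty I(x)^2 e(-nx)\,dx - \int_{|x|>\omega} I(x)^2 e(-nx)\,dx.\]
Writing $I^2$ as a double integral in $(v_1,v_2)$ and integrating in $x$ produces $\delta(v_1^c+v_2^c-n)$; the resulting surface integral, after the substitution $s=v_1^c$, evaluates to $\g^2\,\Gamma(\g)^2\Gamma(2\g)^{-1} n^{2\g-1} = \frac{\Gamma^2(1+\g)}{\Gamma(2\g)}n^{2\g-1}$ using $\g\Gamma(\g)=\Gamma(1+\g)$ (and $X \ge n^\g$ since $n \le M$). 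For the tail I would use the endpoint/stationary-phase bound $I(x)\ll|x|^{-\g}$ together with one integration by parts against $e(-nx)$, gaining a factor $n^{-1}$ and giving a tail $\ll \omega^{-2\g}/n$, which is admissible in the range $M \gg N^{1/3+\e}$ where the lemma is used. The discrepancy $\int_{-\omega}^\omega(\rho^2 - I^2)e(-nx)\,dx$ is controlled by the same Euler–Maclaurin estimate for $\rho - I$ and contributes only a power-of-$N$ saving.

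The error terms $\int_{-\omega}^\omega(2\rho E + E^2)e(-nx)\,dx$ I handle by Cauchy–Schwarz, so it suffices to bound $\int_{-\omega}^\omega|\rho|^2\,dx \ll n^{2\g-1}$ (which follows from $\int_{-\infty}^\infty|I|^2\,dx \ll M^{2\g-1}$) and $\int_{-\omega}^\omega|E|^2\,dx$. The latter is the crux, and I expect it to be the main obstacle. The shortness of the arc is indispensable here: the trivial estimate $\int_{-\omega}^\omega|E|^2 \le \int_{-1/2}^{1/2}|E|^2 = \sum_m |r(m)|^2$ throws away $\omega$ and is hopeless. Instead I would apply a Gallagher-type mean-value inequality,
\[\int_{-\omega}^\omega |E(x)|^2\,dx \ll \omega^2\int_{-\infty}^\infty\Big|\sum_{t<m\le t+(2\omega)^{-1}} r(m)\Big|^2\,dt,\]
and observe that the inner sum telescopes to a short-interval prime error $\t(Y+\ell)-\t(Y)-\ell$ with $Y\asymp t^\g$ and $\ell \asymp \g\,(2\omega)^{-1} Y^{1-c}$. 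Crucially, for the individual intervals of length $\ell$ the short-interval prime number theorem fails, so one cannot argue pointwise; one must use the mean square of this error, which does save. Changing variables to $Y$ and integrating (the factor $\int^X Y^{1-c}\,dY \asymp X^{2-c} = M^{2\g-1}$ appears here) converts the mean-square short-interval estimate into $\int_{-\omega}^\omega|E|^2 \ll M^{2\g-1}\exp(-2a(\log M/\log\log M)^{1/3})$; this is exactly where the classical zero-free region, after averaging, yields the stated exponential shape. Cauchy–Schwarz then gives the cross term $\ll M^{2\g-1}\exp(-a(\log M/\log\log M)^{1/3})$ and the $E^2$ term even smaller, uniformly for $n \in [M/2,M]$, completing the estimate. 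The mean-square short-interval prime bound, uniform in $n$, is the hardest ingredient; the main-term evaluation is routine.
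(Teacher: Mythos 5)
The paper does not actually prove this lemma: it is imported wholesale from Zhu (his Lemma 3.6, as corrected in the erratum), so your reconstruction has to be judged against what a complete proof requires. Most of your architecture is the standard one and is sound: the identity $T(x)=\sum_{m\le M}\bigl(\t((m+1)^\g)-\t(m^\g)\bigr)e(mx)$ (the boundary primes with $M<p^c<M+1$ cost only $O(\log M)$), the splitting $T=\rho+E$, the evaluation $\int_{-\infty}^{\infty}I(x)^2e(-nx)\,dx=\g^2B(\g,\g)\,n^{2\g-1}=\Gamma^2(1+\g)\Gamma(2\g)^{-1}n^{2\g-1}$, and the Gallagher reduction of $\int_{-\omega}^{\omega}|E|^2dx$, whose bookkeeping ($\omega^2\ell(Y)^2\,dt\asymp Y^{1-c}dY$ and $\int^X Y^{1-c}dY\asymp X^{2-c}=M^{2\g-1}$) is correct. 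Your caveat that the main-term tail forces $M\gg N^{1/3+\e}$ is not a blemish but a necessity: for $M\ll N^{1/3}$ one has $T(x)=T(0)\bigl(1+O(N^{-\e}\mc L)\bigr)$ on all of $[-\omega,\omega]$, so the left-hand side is $\asymp\omega M^{2\g}=o(M^{2\g-1})$ and the stated asymptotic is simply false; the lemma is only \emph{needed} for $M\ge N^{1-\s}\gg N^{1/3+\e}$ (smaller dyadic blocks contribute trivially to the exceptional set, since $\s\le 6/75$), though the paper's blanket hypothesis $2\le M\le N$ hides this.

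The genuine gap is the step you yourself call the hardest ingredient and then treat as available "from the classical zero-free region, after averaging". Gallagher's lemma leaves you needing
\[\int_{X/2}^{X}\bigl(\t(Y+\ell)-\t(Y)-\ell\bigr)^2dY\ \ll\ X\ell^2\exp\Bigl(-2a\bigl(\tfrac{\log X}{\log\log X}\bigr)^{1/3}\Bigr),\qquad \ell\asymp \g(2\omega)^{-1}Y^{1-c},\]
and in the essential case $M\asymp N$ the length at the top is $\ell\asymp N^{1/3+\e}X^{1-c}=X^{1-2c/3+o(1)}$. Every unconditional mean-value theorem of this type is proved from a zero-density estimate $N(\sigma,T)\ll T^{A(1-\sigma)}\log^{C}T$ combined with the Vinogradov--Korobov region (that combination is exactly where the shape $\exp(-a(\log/\log\log)^{1/3})$ comes from, as in Zaccagnini's theorem on primes in almost all short intervals), and it requires $\ell\gg Y^{1-2/A+\e}$; with Huxley's exponent $A=12/5$ this threshold is $Y^{1/6+\e}$. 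Now $1-2c/3>1/6$ holds if and only if $c<5/4$, whereas the theorem runs up to $c<24/19\approx1.263$. So for $5/4\le c<24/19$ the key input of your proof did not exist in the classical literature: one would need the 2024 Guth--Maynard exponent $A=30/13$ (threshold $Y^{2/15+\e}$, and $2/15<1-2c/3$ throughout the range) or a genuinely different treatment of $E$, e.g.\ via the explicit formula and exponential-sum estimates on the zero sums. As written, your argument proves the lemma only for $c<5/4$. Checking that the short intervals produced by the fixed $\omega=N^{-1/3-\e}$ clear the available zero-density threshold is precisely the delicate content concealed in the citation to Zhu, and it is the one computation your proposal omits.
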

  
 \begin{proof}
This is Lemma 3.6 of Zhu \cite{zhu}, with minor corrections provided on p.~197 of the erratum.
 \end{proof}
 \bigskip
 
 \section{Proof of Theorem \ref{thm:Let1<c<24/19}}

Let $R(n)$ be the number of representations $n = [p_1^c] + [p_2^c]$, $p_1$ and $p_2$ prime. We proceed initially as on p.~198 of Zhu \cite{zhu}. For our theorem, it suffices by a dyadic argument to show that when $M \le N$, the quantity
 \[Z(M) := \left|\left\{n \in \left(\frac M2, M\right]
 : \ R(n) = 0\right\}\right|.\]
satisfies
 \[Z(M) \ll M^{1-\s + \e}.\]
Let $n \in \left(\frac M2, M\right]$. If $R(n) = 0$, then
 \[\int_{-\omega}^{1-\omega} T(x)^2 e(-xn) dx = 0,\]
and Lemma \ref{lem:Wehave,fornin[M/2,M]} yields
 \[\left|\int_{m} T(x) e(-xn)dx\right|= \left|
 \int_{\mc M} T(x)^2 e(-xn)dx\right|
 \gg M^{2\g -1}.\]
Thus
 \begin{align*}
Z(M)M^{4\g-2} &\ll \sum_{n\in Z(M)} \left|\int_{m} T(x)^2 e(-xn)dx\right|^2\\[2mm]
&\ll \int_{m} |T(x)|^4 dx,
 \end{align*}
where Bessel's inequality is used in the last step. It remains to show that
 \[M^{2-4\g} \int_m |T(x)^4dx \ \ll \ M^{1-\s+\e},\]
so that it suffices to show that, with $X = M^\g$,
 \begin{equation}\label{eq:intm|T(x)|4dx}
\int_m |T(x)|^4 dx \ll X^{4-c-c\s + \e}. 
 \end{equation}
 
Let us assume for a moment that
 \begin{equation}\label{eq:sum1lenleXe(x[nc])}
\sum_{1\le n \le X} e(x[n^c]) \ll X^{2-c-c\s + \e/2} + X^{1-c} \|x\|^{-1} \quad (0 < x \le 2)
 \end{equation}
and that
 \begin{equation}\label{eq:T(x)llX1-cs/2+e/4}
T(x) \ll X^{1-c\s/2+\e/4} \quad (\omega \le x \le 1-\omega).
 \end{equation}
Lemma \ref{lem:suppan(1lenleX)} (ii) yields
 \[\int_m |T(x)|^4dx \ll \mc L^4 (X^{2-c\s + \e/2+2-c}
 + X^{2-c-c\s + \e/2 + 2}),\]
so that \eqref{eq:intm|T(x)|4dx} holds. It remains to prove \eqref{eq:sum1lenleXe(x[nc])}, \eqref{eq:T(x)llX1-cs/2+e/4}. Using a dyadic argument, we need only show (with general $X \ge 2$) that
 \begin{equation}\label{eq:sumfrace(x[nc])}
\sum_{\frac X2 < n \le X} e(x[n^c]) \ll X^{2-c-c\s + \e/2} + X^{1-c} \|x\|^{-1} \quad (0 < x < 2)
 \end{equation}
and that
 \begin{equation}\label{eq:sumfrac(logp)}
\sum_{\frac X2 < n \le X} (\log p) e(x[p^\s]) \ll X^{1-c\s/2 + \e/4} \quad (\omega \le x \le 1-\omega).
 \end{equation}
Since $c\s < 1$, we may show in place of \eqref{eq:sumfrac(logp)} that
 \begin{equation}\label{eq:sumfracLambda(n)}
\sum_{\frac X2 < n \le X} \Lambda(n) e(x[n^\s]) \ll X^{1-c \s/2 + \e/4}.
 \end{equation}

For \eqref{eq:sumfrace(x[nc])}, we apply Lemma \ref{lem:Let0<x<1}, taking $H= X^{c-1+c\s}$. Let
 \[S_h = \sum_{\frac X2 < n \le X} e((h+\g)n^c) \ ,
 \ h \ge 0,\]
where $\g = \{x\}$ if $h=0$ and $\g \in \{0, \{x\}, -\{x\}\}$ for $h \ge 1$. It suffices to show that
 \begin{align}
\sum_{h=0}^\infty \min\left(\frac 1{h+1} \, , \, \frac H{h^2}\right) |S_h| &\ll X^{2-c-c\s + \e/2} + X^{1-c} \|x\|^{-1}\label{eq:sumh=0inftymin}\\
&\hskip .5in (0 < x < 2).\notag
 \end{align}

The term $X^{1-c}\|x\|^{-1}$ on the right-hand side of \eqref{eq:sumh=0inftymin} is needed only when $h=0$ or 1 and $X^{c-1}(h+\g)<\e$. In this case, the Kusmin-Landau theorem (\kern-4pt\cite[Theorem 2.1]{grakol}) gives
 \[S_h \ll X^{1-c} (h+\g)^{-1} \ll X^{1-c}\|x\|^{-1}.\]
Let
 \[F_h = (h+\g) X^c \ , \ N_h = F_hX^{-1}.\]
To bound $S_h$ we apply the B-process followed by Lemma \ref{lem:Letkbeaninteger} with $k=5$. (It is crucial here that $\frac c{c-1} > 4$ for the fifth derivative to have appropriate order of magnitude.) The error term arising from the B-process is $\ll X^{1/2}$ since $F_hX^{-1} \gg 1$. This gives rise to an acceptable contribution to \eqref{eq:sumh=0inftymin} since $2 - c - c\s > 1/2$. In order to obtain \eqref{eq:sumh=0inftymin} it remains to show that
 \begin{align*}
\sum_{h=0}^\infty \min\left(\frac 1{h+1}\, , \, \frac H{h^2}\right) X^{1+\e/2} F_h^{-1/2} & N_h((F_hN_h^{-5})^{\frac 1{20}} + N_h^{-1/20}+(F_hN_h^{-5})^{-1/50}N_h^{-1/10})\\
&\qquad \ll X^{2-c-c\s + \e/2}.
 \end{align*}
Now

 \begin{align*}
X^{1+\e/2} F_h^{-1/2} N_h(F_hN_h^{-5})^{1/20} &\ll X^{1/4+\e/2} F_h^{3/10}\\[2mm]
&\ll X^{1/4 + 3c/10 + \e/2} (h+1)^{3/10},\\[2mm]
\sum_{h=0}^\infty  \min\left(\frac 1{h+1}\, , \, \frac H{h^2}\right)& X^{1/4+3c/10+\e/2}(h+1)^{3/10}\\[2mm]
&\ll H^{3/10} X^{1/4 + 3c/10 + \e/2}\\[2mm]
&\ll X^{\frac 3{10}\, (c-1+c\s)+1/4+3c/10+\e/2}\\[2mm]
&\ll X^{2-c-c\s + \e/2}
 \end{align*}
since $\s \le (41 - 32c)/26c$ is easily deduced from the definition of $\s$.

Next,
 \begin{align*}
X^{1+\e/2}F_h^{-1/2} N_h^{19/20} &\ll F_h^{9/20} X^{1/20 + \e/2}\\[2mm]
&\ll (h+1)^{9/20} X^{(9c+1)/20+\e/2},\\[2mm]
\sum_{h=0}^\infty \min\left(\frac 1{h+1}\, , \, \frac H{h^2}\right) & (h+1)^{9/20} X^{(9c+1)/20+\e/2}\\[2mm]
&\ll X^{\frac 9{20}\, (c-1+c\s)+(9c+1)/20+\e/2}\\[2mm]
&\ll X^{2-c-c\s + \e/2}
 \end{align*}
since
 \[\s \le \frac{48-38c}{29c}\, .\]

Finally,
 \begin{align*}
X^{\e/2} F_h^{-1/2} (F_h N_h^{-5})^{-1/50}& N_h^{9/10} = X^{\e/2} F_h^{24/50}\\[2mm]
&\ll (h+1)^{12/25} X^{12c/25 + \e/2},\\[2mm]
\sum_{h=0}^\infty \min\left(\frac 1{h+1}\, , \, \frac H{h^2}\right) & (h+1)^{12/25} X^{12c/25+\e/2}\\[2mm]
&\ll X^{(c-1+c\s)12/25} X^{12c/25 + \e/2}.
 \end{align*}
This is
 \[\ll X^{2-c-c\s+\e/2}\]
since $\s \le (62-49c)/37c$ follows from the definition of $\s$. Thus \eqref{eq:sumh=0inftymin} holds.

In order to prove \eqref{eq:sumfracLambda(n)}, we apply Lemma \ref{lem:LetGbeacomplexfunction}, taking $u = \frac{X^{1/5}}{128}$, $z = X^{2/5}$, $v = 128X^{1/3}$. Recall that $\omega \le x \le 1 - \omega$. It suffices to show that, for $|a_m| \le 1$,
 \[\underset{mn\le X}{\sum_m a_m \, \sum_{n \ge X^{2/5}}} 
 (\log n)^h e(x[(mn)^c]) \ll X^{1-\frac{c\s}2 + \frac \e 5}\]
\textit{and} that, for $|a_m| \le 1$, $|b_n| \le 1$ we have
 \[\underset{mn\le X}{\sum_m a_m \sum_{X^{1/5}
 \le n \le 2^7 x^{1/3}}} b_n e(x([(mn)^c])) \ll
 X^{1-c\s/2 + \e/5}.\]
After applying Lemma \ref{lem:Let0<x<1} with $H = X^{c\s/2}$, a partial summation if needed, and dyadic dissections we see that it suffices to show
 \begin{equation}\label{eq:sumh=0inftymin(1(h+1)}
\sum_{h=0}^\infty \min\left(\frac 1{h+1}\, , \, \frac H{h^2}\right) |S_j(h)| \ll X^{1-\frac{c\s}2 + \e/6} \quad (j=1,2).
 \end{equation}

Here
 \[S_1(h) = \sum_{m\le XY^{-1}} \Bigg|
 \sum_{\substack{Y \le n < Y'\\
 mn \le X}} e((h+\g)(mn)^c\Bigg|\]
where $Y\ge X^{2/5}$, $Y< Y' \le 2Y$ and
 \begin{equation}\label{eq:gamma=x if h=0}
\g = x \, \text{ if }\, h = 0 \ , \ \g \in \{x,-x\} \, \text{ if } \, h \ge 1.
 \end{equation}
As for $S_2(h)$,
 \[S_2(h) = \sum_{Z \le m \le 2Z} \Bigg|
 \sum_{\substack{Y < n \le 2Y\\
 mn \le X}} b_n e((h + \g)(mn)^c)\Bigg|\]
with $Z \le X$ and $X^{1/5} \ll Y \ll X^{1/3}$, $ZY \ll X$.

To bound $S_1(h)$ we apply Lemma \ref{lem:Letellge0} with $\ell = 2$ to the inner sum. Let $F = (h+\g)X^c$, then
 \begin{align*}
F \gg xX^c &\gg X^{c-1/3-\e},\\[2mm]
XY^{-1}(YF^{-1}) &= XF^{-1} \ll X^{4/3-c+\e} \ll X^{1-c\s}
 \end{align*}
since $c\s < 6/75$ from the definition of $\s$. This term produces a satisfactory contribution in \eqref{eq:sumh=0inftymin(1(h+1)}. Also
 \begin{align*}
F \ll (h+1)& X^c,\\[2mm]
XY^{-1} F^{1/14} Y^{5/7} &\ll (h+1)^{1/14} X^{1+c/14 - \frac 27 \cdot \frac 25},\\[2mm]
\sum_{h=0}^\infty \min\Bigg(\frac 1{h+1}\, &, \, \frac H{h^2}\Bigg) |S_1(h)|\\[2mm]
&\ll H^{1/14} X^{31/35 + c/14}\\[2mm]
&\ll X^{\frac{31}{35} + \frac c{14} + \frac{c\s}{28}} \ll X^{1-c\s/2}
 \end{align*}
since
 \[\s \le \frac{16-10c}{75c}\, .\]

For $S_2(h)$, we proceed as in the proof of Theorem 5 of \cite{rcb1}. We have, for $Q \le N$,
 \begin{equation}\label{eq:|S2(h)|2llfracX2Q}
|S_2(h)|^2 \ll \frac{X^2}Q + \frac XQ \ \sum_{q \le Q} \ \sum_{n \asymp Y} \Bigg|\sum_{m\in I(n)} e((h+\g)m^c((n+q)^c - n^c))\Bigg|
 \end{equation}
where $I(n)$ is a subinterval of $[Z, 2Z]$. Take $Q = X^{c\s} \le X^{6/75}$.

We estimate the inner sum in \eqref{eq:|S2(h)|2llfracX2Q} using Lemma \ref{lem:Letellge0} with $\ell = 1$. We have
 \begin{align*}
\sum_{m\in I(n)} &e((h+\g)m^c((n+q)^c-n^c))\\[2mm]
&\quad \ll ((h+1)X^{c\s + c}Y^{-1})^{1/6} Z^{1/2} + ZF_{h,q}^{-1}\\
\intertext{where}
F_{h,q} &= (h+\g) X^c\, q\, Y^{-1} \ge X^{c-2/3-\e} \ge X^{c\s}
 \end{align*}
leading to an acceptable term arising from $ZF_{h,q}^{-1}$ in \eqref{eq:sumh=0inftymin(1(h+1)}. (Of course, $Q$ was chosen so that $X^2/Q$ is likewise acceptable.)

Now a short computation yields
 \begin{align*}
\frac XQ\ \sum_{q\le Q} \ \sum_{n\le Y} ((h+1) &X^{c\s + c} Y^{-1})^{1/6} Z^{1/2}\\
& \ll  (h+1)^{1/6} X^{29/18+(c\s+c)/6}
 \end{align*}
(since $Y \ll X^{1/3}$). The contribution to \eqref{eq:sumh=0inftymin(1(h+1)} is
 \begin{align*}
\ll \sum_{h=0}^\infty &\min \left(\frac 1{h+1}\, , \, \frac H{h^2}\right) (h+1)^{1/12} X^{29/36+(c\s + c)/12}\\[2mm]
&\hskip .5in \ll X^{29/36 + (3c\s + 2c)/24} \ll X^{1-c\s/2}
 \end{align*}
since $\s \le \frac{14-6c}{45c}$ from the definition of $\s$. This shows that \eqref{eq:sumh=0inftymin(1(h+1)} holds, and the proof of Theorem \ref{thm:Let1<c<24/19} is complete. $\hfill\Box$
 \bigskip

 \end{document}